\documentclass[letterpaper,11pt]{article}

\usepackage{amsmath,amsfonts,amssymb,amsthm,mathrsfs,dsfont}
\usepackage{cases}
\usepackage{enumerate}
\usepackage[usenames,dvipsnames]{color}
\usepackage{verbatim} 
\usepackage{array} 
\usepackage{graphicx,epstopdf} 
\usepackage{mathtools}

\theoremstyle{plain} \newtheorem{thm}{Theorem}
\theoremstyle{plain} \newtheorem{lemma}[thm]{Lemma}
\theoremstyle{plain} 
\theoremstyle{plain} \newtheorem{cor}[thm]{Corollary}
\theoremstyle{plain} 
\theoremstyle{plain} 
\theoremstyle{definition} 
\theoremstyle{definition} 
\theoremstyle{plain} 

\usepackage[normalem]{ulem}
\newcommand{\patColor}[1]{\ifx \hfuzz#1\hfuzz \textcolor{Cyan}{[]} \else \textcolor{Cyan}{#1} \fi}



\newcommand{\sub}[0]{\subseteq}
\newcommand{\sm}[0]{\setminus}
\renewcommand{\dots}[0]{,\ldots,}
\newcommand{\ra}[0]{\rightarrow}

\newcommand{\beq}[1]{\begin{equation}\label{#1}}
\newcommand{\enq}[0]{\end{equation}}

\newcommand{\bn}[0]{\bigskip\noindent}
\newcommand{\mn}[0]{\medskip\noindent}
\newcommand{\nin}[0]{\noindent}

\newcommand{\0}[0]{\varnothing}
\newcommand{\E}[0]{{\mathbb{E}}}

\newcommand{\C}[2]{{{#1}\choose{{#2}}}}
\newcommand{\Cc}[0]{\tbinom}

\newcommand{\gb}[0]{\beta }
\newcommand{\gc}[0]{\gamma }
\newcommand{\gd}[0]{\delta }

\newcommand{\gl}[0]{\lambda }
\newcommand{\gL}[0]{\Lambda}

\newcommand{\gO}[0]{\Omega}

\newcommand{\gs}[0]{\sigma}

\newcommand{\gz}[0]{\zeta}
\newcommand{\vt}[0]{\vartheta}

\newcommand{\J}[0]{{\cal J}}

\newcommand{\T}[0]{{\cal T}}

\newcommand{\ttt}[0]{t}

\newcommand{\YY}[0]{{\bf Y}}


\newcommand{\pat}[1]{{\color{Blue}{#1}}}

\newcommand{\pr}{\mathbb{P}}
\newcommand{\mean}{\mathbb{E}}
\newcommand{\fit}{\text{fit}}
\newcommand{\eps}{\varepsilon}
\newcommand{\perm}[1]{\mathfrak{S}_{#1}}
\newtheorem*{remark*}{Remark}

\title{Proof of an entropy conjecture of \\ Leighton and Moitra}

\author{
H\"{u}seyin Acan
\footnote{Department of Mathematics, Rutgers University}
\thanks{Supported by National Science Foundation Fellowship (Award No.~1502650).}
\\
{\small \texttt{huseyin.acan@rutgers.edu}}
\and
Pat Devlin \footnotemark[1] \thanks{Supported by NSF grant DMS1501962.}\\
{\small \texttt{prd41@math.rutgers.edu }}
\and
Jeff Kahn \footnotemark[1] \footnotemark[3]   \\
{\small \texttt{jkahn@math.rutgers.edu}}
}
\date{}

\begin{document}
\renewcommand{\thefootnote}{\fnsymbol{footnote}}
\footnotetext{AMS 2010 subject classification:  05C20, 05D40, 94A17, 06A07}
\footnotetext{Key words and phrases:  entropy, permutations, tournaments, regularity}
\maketitle

\begin{abstract}
We prove the following conjecture of Leighton and Moitra.
Let $T$ be a tournament on $[n]$ and $\perm{n}$ the set of permutations of~$[n]$.
For an arc $uv$ of $T$, let $A_{uv}=\{\sigma \in \perm{n}\,: \, \sigma(u)<\sigma(v) \}$.

\mn
\textbf{Theorem.}
For  a fixed $\eps>0$, if $\pr$ is a probability distribution on $\perm{n}$
such that $\pr(A_{uv})>1/2+\eps$ for every arc $uv$ of $T$, then the binary entropy of $\pr$
is at most $(1-\vt_{\eps})\log_2 n!$ for some (fixed) positive $\vt_\eps$.

\mn
When $T$ is transitive the theorem is due to Leighton and Moitra; for this case
we give a short proof with a better $\vt_\eps$.

\end{abstract}

\section{Introduction}\label{Intro}

In what follows we use $\log $ for $\log_2$ and $H(\cdot)$ for binary entropy.
The purpose of this note is to
prove the following natural statement, which was conjectured by Tom Leighton and Ankur Moitra \cite{L-M}
(and told to the third author by Moitra in 2008).

\begin{thm}\label{LMConj}
Let $T$ be a tournament on $[n]$ and $\sigma$ a random
(not necessarily uniform) permutation of $[n]$ satisfying:
\beq{hyp}
\mbox{for each arc $uv$ of $T$, $~\pr(\gs(u) < \gs(v)) > 1/2+\eps$.}
\enq
Then
\beq{conclusion}
H(\sigma) \leq (1-\vt )  \log n!,
\enq
where $\vt > 0$ depends only on $\varepsilon$.
\end{thm}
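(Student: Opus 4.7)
The proof splits naturally into two parts: a self-contained treatment of the transitive case and a reduction of the general case to it via tournament regularity.

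\textbf{Transitive case.} Assume WLOG that $T$ is the standard order on $[n]$, so (\ref{hyp}) reads $\pr(\sigma(i)<\sigma(j))>1/2+\eps$ for every $i<j$, i.e., $\sigma$ is biased toward the identity. I would expand $H(\sigma)$ by the chain rule over positions, revealing $\sigma^{-1}(1),\sigma^{-1}(2),\ldots,\sigma^{-1}(n)$ in order, and argue that at a constant fraction of steps the conditional entropy of $\sigma^{-1}(k)$ is strictly smaller than the uniform bound $\log(n-k+1)$. The bias at step $k$ comes from the fact that the pairwise hypothesis restricted to the set $R_k \sub [n]$ of surviving elements forces $\sigma^{-1}(k)$ to favor smaller remaining indices in expectation over the revealed past. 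Quantifying this via a convexity or second-moment estimate should give $H(\sigma^{-1}(k)\mid\text{past}) \le \log|R_k| - \Theta(\eps^2)$ at a positive fraction of steps, summing to a saving of $\Theta(n\log n) = \Theta(\log n!)$.

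\textbf{General case.} Apply a tournament regularity lemma to partition $[n] = V_1 \cup \cdots \cup V_k$ into $k = k(\eps)$ nearly equal parts so that all but a $\delta$-fraction of pairs $(V_i,V_j)$ are $\delta$-regular, with density in the majority direction at least $1-\delta$. Let $T'$ be the reduced tournament on $[k]$, oriented by majority. Decompose $H(\sigma) = H(b) + H(\sigma\mid b)$, where $b:[n]\to[k]$ sends each position to the block label of its preimage under $\sigma$. Within each block, the induced tournament $T|_{V_i}$ still satisfies (\ref{hyp}) with the same $\eps$, so an induction on $n$ should give $H(\sigma\mid b) \le (1-\vt)\sum_i \log|V_i|!$; the remaining factor $H(b)$ should be controlled by transferring the biased condition to the block-assignment level and invoking the transitive case --- either on the block order or directly on the positional structure of $b$ --- to extract a constant-fraction saving.

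\textbf{Main obstacle.} The principal difficulty is extracting a constant-fraction saving in $H(b)$: a naive application of the transitive case to the $k$-permutation of block orderings saves only $O(\log k!) = O(1)$ entropy, while $H(b) \approx n\log k$ is much larger. The saving must come from the position-level bias --- each position in $V_i$ is biased to precede each position in $V_j$ when $V_i \to V_j$ in $T'$ --- which constrains $b$ as a $[k]$-valued word of length $n$, not merely as a $k$-permutation. A secondary subtlety is that the induction within a block requires the biased condition to be preserved after conditioning on $\sigma(V_i)$; the marginal hypothesis does not directly give the conditional one, so either a strengthened inductive hypothesis or an averaging argument is needed to close the loop. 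Managing these two points simultaneously --- while absorbing the irregular pairs into a small exceptional set whose entropy contribution is negligible --- is where the work of the proof lies.
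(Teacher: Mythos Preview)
Your general-case plan has a structural gap that you partly identify but underestimate. The decomposition $H(\sigma)=H(b)+H(\sigma\mid b)$ together with induction inside the blocks gives at best
\[
H(\sigma)\;\le\;H(b)+(1-\vt)\sum_i\log|V_i|!\;=\;H(b)+(1-\vt)\bigl(\log n!-H_{\max}(b)\bigr),
\]
where $H_{\max}(b)=\log\binom{n}{|V_1|,\ldots,|V_k|}$. Closing the induction therefore requires $H(b)\le(1-\vt)H_{\max}(b)$: the \emph{same} fractional saving on $b$ that you are trying to establish for $\sigma$. But the only constraints on $b$ are inherited from the reduced tournament $T'$ on $[k]$, and $T'$ is an \emph{arbitrary} tournament---in particular not transitive---so invoking the transitive case is circular, and what you actually need is a word-level analogue of Theorem~\ref{LMConj} for $T'$. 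That is not visibly easier than the original problem; you have pushed the difficulty into $H(b)$ rather than removed it. (By contrast, your ``secondary subtlety'' about conditioning is harmless: $H(\sigma\mid b)\le\sum_iH(\pi_i\mid b)\le\sum_iH(\pi_i)$, and the \emph{marginal} relative order $\pi_i$ on $V_i$ inherits \eqref{hyp} directly, so the within-block induction needs no strengthened hypothesis.)

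The paper avoids induction entirely and does not use a single-level regularity partition. Instead it repeatedly applies a one-pair regularity lemma to build a rooted tree whose internal nodes $V_i$ carry $\gd$-regular bipartite pieces $S_i\subseteq T\cap(L_i\times R_i)$, with $\sum_i|V_i|=\Omega(n\log n)$. Under $\pr$, two applications of Markov show that with probability at least $\eps/2$ the event $Q=\{\fit(\sigma,S_i)\ge\eps|S_i|$ for a family of $i$'s with $\sum|V_i|=\Omega(n\log n)\}$ holds; under the \emph{uniform} measure $\mu$, regularity plus the nested tree structure (which makes the relevant ``unsafe'' events independent) gives $\mu(Q)=\exp[-\Omega(n\log n)]$. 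The bound then falls out of $H(\sigma)\le 1+\log n!+\pr(Q)\log\mu(Q)$. The essential difference from your plan is that the recursive decomposition manufactures regular pieces at \emph{every scale}, so the savings accumulate to $\Omega(n\log n)$ in one shot rather than through a level-by-level induction that has to be rebalanced against the block-assignment entropy.
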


\nin
(We will usually think of permutations as
bijections $\gs:[n]\ra [n]$).
The original motivation for Leighton and Moitra came mostly from questions about sorting partially
ordered sets;
see \cite{L-M} for more on this.

\mn

For the special case of \emph{transitive} $T$,
Theorem~\ref{LMConj} was proved in \cite{L-M}
with $\vartheta_\eps =C\eps^4$.
Note that for a \emph{typical} (a.k.a.\ \emph{random}) $T$, the conjecture's
hypothesis is unachievable, since, as shown long ago by Erd\H{o}s and Moon \cite{EM},
no $\gs$ agrees with $T$ on more than a $(1/2+o(1))$-fraction of its arcs.
In fact, it seems natural to expect that transitive tournaments are the \emph{worst} instances,
being the ones for which the hypothesized agreement is easiest to achieve.
From this standpoint, what we do here may be considered somewhat unsatisfactory,
as our $\vt$'s are quite a bit worse than those in \cite{L-M}.
For transitive $T$ it's easy to see \cite[Claim 4.14]{L-M}
that one can't take $\vt$ greater than $2\eps$, which seems likely to be close to the truth.
We make some progress on this, giving a surprisingly simple proof of the following improvement of
\cite{L-M}.

\begin{thm}\label{thm:transitive}
For $T$, $\pr$, $\sigma$ as Theorem~\ref{LMConj} with $T$ transitive,
\[
H(\sigma) \leq (1-\eps^2/8)  n\log n.
\]
\end{thm}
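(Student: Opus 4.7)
The plan is to prove the theorem by induction on $n$; the base case covers all $n \leq 2^{O(1/\eps^2)}$ via the trivial bound $H(\sigma) \leq \log n! \leq (1 - \eps^2/8) n \log n$ (valid by Stirling in this regime). For the inductive step, split $[n]$ into halves $B_1 = [1, n/2]$ and $B_2 = [n/2 + 1, n]$ (odd $n$ handled analogously); write $V_a := \sigma(B_a)$, and let $\tau_a \in \perm{n/2}$ denote the \emph{pattern} of $\sigma|_{B_a}$---the permutation of $[n/2]$ with the same relative order as $(\sigma((a-1)n/2 + 1), \ldots, \sigma(an/2))$. Since $(V_1, \tau_1, \tau_2)$ determines $\sigma$, subadditivity gives
\[
H(\sigma) \leq H(V_1) + H(\tau_1) + H(\tau_2).
\]
Each $\tau_a$ inherits the hypothesis verbatim (for $l < m$, $\pr(\tau_a(l) < \tau_a(m)) = \pr(\sigma((a-1)n/2 + l) < \sigma((a-1)n/2 + m)) > 1/2 + \eps$), so by induction $H(\tau_a) \leq (1 - \eps^2/8)(n/2)\log(n/2)$.

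The crux is bounding $H(V_1) \leq n - \eps^2 n/8$. Summing cross-block constraints yields $\mean[N] > (n/2)^2 (1/2 + \eps)$, where $N := |\{(i,j) \in B_1 \times B_2 : \sigma(i) < \sigma(j)\}|$. A direct calculation shows $N$ depends only on $V_1$ and equals $(3n^2 + 2n)/8 - S_1$ with $S_1 := \sum_{v \in V_1} v$; hence $\mu_U - \mean_P[S_1] > \eps n^2 / 4$, where $\mu_U = n(n+1)/4$ is the mean of $S_1$ under the uniform distribution $\mathrm{Unif}$ over size-$(n/2)$ subsets of $[n]$. Hoeffding's inequality for sampling without replacement bounds the centered MGF of $S_1$ under $\mathrm{Unif}$ by $\exp(\lambda^2 v/2)$ with variance proxy $v \leq \sum_{k=1}^n k^2/4 \leq n^3/12$, so the variational lower bound for KL divergence gives $D(V_1 \,\|\, \mathrm{Unif}) \geq (\eps n^2/4)^2/(2v) \geq 3\eps^2 n / (8\ln 2)$ bits, comfortably above the required $\eps^2 n/8$. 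Combined with $\log\binom{n}{n/2} \leq n$, this yields $H(V_1) \leq n - \eps^2 n/8$, and plugging in gives
\[
H(\sigma) \leq (n - \eps^2 n/8) + (1 - \eps^2/8) n (\log n - 1) = (1 - \eps^2/8) n \log n.
\]

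The main obstacle is the KL estimate for $H(V_1)$: it reduces to Hoeffding's inequality for hypergeometric sums with the right variance proxy, and once that is in hand, the constants balance to yield the target $\eps^2/8$ with slack to spare.
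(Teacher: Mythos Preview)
Your inductive argument is correct and, once unrolled, reproduces exactly the dyadic decomposition the paper uses: your triple $(V_1,\tau_1,\tau_2)$ is the paper's $\YY_i$ at the top level together with the recursive data on each half, so structurally the two proofs coincide. The genuine difference is in how you establish the key estimate $H(V_1)\le(1-\eps^2/8)n$ (the paper's Lemma~\ref{MPC}). The paper takes a more elementary route: bound $H(\YY)\le\sum_a H(\tfrac12+\gd_a)\le 2m-2\sum\gd_a^2$ by marginal subadditivity, use the hypothesis to force $\sum_b \gd_b b>\eps m^2$, and apply Cauchy--Schwarz to get $\sum\gd_b^2\ge\eps^2 m/8$. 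Your route---Donsker--Varadhan plus a sub-Gaussian bound on $S_1$ under the uniform measure---is also valid (negative association of the indicators $\mathbf 1_{k\in V_1}$ justifies the product MGF bound, or one may cite Hoeffding's without-replacement inequality with the slightly larger proxy $v\approx n^3/8$); do note that $\sum_{k\le n}k^2/4=n(n+1)(2n+1)/24$ actually \emph{exceeds} $n^3/12$, so your inequality $v\le n^3/12$ is off by a $(1+O(1/n))$ factor---harmless given your slack. Your method in fact yields a larger KL deficit (about $3\eps^2 n/(8\ln 2)$ bits versus the $\eps^2 n/8$ needed), so it could give a better constant if pushed; the paper's approach buys simplicity---no concentration machinery, just Cauchy--Schwarz.
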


\mn

The proof of Theorem~\ref{LMConj} is given in Section~\ref{Proof} following brief preliminaries
in Section~\ref{Prelim}.
The underlying idea is similar to that of \cite{L-M}, which in turn
was based on the beautiful tournament ranking bound of W. Fernandez de la Vega \cite{delavega};
see Section~\ref{Proof} (end of ``Sketch") for an indication of the relation to~\cite{L-M}.
Theorem~\ref{thm:transitive} is proved in Section~\ref{sec:transitive}.

\section{Preliminaries}\label{Prelim}

\mn
\emph{Usage}

In what follows we assume $n$ is large enough to support our arguments and
pretend all large numbers are integers.

As usual $G[X]$ is the subgraph of $G$ induced by $X$; we use
$G[X,Y]$ for the bipartite subgraph induced
(in the obvious sense) by disjoint $X$ and $Y$.
For a digraph $D$, $D[X]$ and $D[X,Y]$ are used analogously.
For both graphs and digraphs, we use $|\cdot|$ for number of edges (or arcs).

Also as usual, the \emph{density} of a pair $(X,Y)$ of disjoint subsets of $V(G)$
is $d(X,Y) =d_G(X,Y) = |G[X,Y]|/(|X||Y|)$, and
we extend this to bipartite digraphs $D$ in which
\beq{onedirection}
\mbox{\emph{at most one of
$D\cap (X\times Y)$, $D\cap (Y\times X)$ is nonempty.}}
\enq
For a digraph $D$,
$D^r$ is the digraph gotten from $D$ by reversing its arcs.

Write $\perm{n}$ for the set of permutations of $[n]$.  For $\sigma\in \perm{n}$,
we use $T_{\sigma}$ for the corresponding (transitive)
tournament on $[n]$ (that is, $uv\in T_\gs$ iff $\gs(u)<\gs(v)$)
and for a digraph $D$ (on $[n]$) define
\[
\fit(\sigma,D)= |D\cap T_{\sigma}|-|D^r\cap T_{\sigma}|
\]
(e.g.\ when $D$ is a tournament, this is a measure of
the quality of $\gs$ as a ranking of $D$).

\mn
\emph{Regularity}

Here we need just Szemer\'edi's basic notion \cite{Szem}
of a regular pair and a very weak version (Lemma~\ref{LKomlos}) of his Regularity Lemma.
As usual a bipartite graph $H$ on disjoint $X\cup Y$ is
$\gd$-\emph{regular} if
\[
|d_H(X',Y')-d_H(X,Y)|< \gd
\]
whenever $X'\sub X$, $Y'\sub Y$, $|X'|>\gd|X|$ and $|Y'|>\gd|Y|$,
and we extend this in the obvious way
to the situation in \eqref{onedirection}.
It is easy to see that if a bigraph $H$ is $\gd$-regular
then
its bipartite complement is as well; this implies
that for a tournament $T$ on $[n]$ and
$X$, $Y$ disjoint subsets of $[n]$,

\mn
\beq{comp}
\mbox{\emph{$T\cap (X\times Y)$ is $\gd$-regular if and only if $T\cap (Y\times X)$ is}.}
\enq

\medskip
The following statement should perhaps be considered folklore, though
similar results were proved by
J\'anos Koml\'os, circa 1991 (see \cite[Sec.\ 7.3]{Komlos-Simonovits}).
\begin{lemma}\label{LKomlos}
For each $\gd>0$ there is a
$\gb > 2^{-\gd^{-O(1)}}$
such that
for any bigraph $H$ on $X\cup Y$ with $|X|,|Y|\geq n$, there is a
$\gd$-regular pair $(X',Y')$ with $X'\sub X, Y'\sub Y$ and each of $|X'|,|Y'|$
at least $\gb n$.
\end{lemma}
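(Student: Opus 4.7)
The plan is to prove this by iterative refinement driven by the defect form of Cauchy--Schwarz, the same engine underlying Szemer\'edi's regularity lemma; crucially, because only a \emph{single} $\gd$-regular pair is sought (rather than a regular equipartition of the whole vertex set), one can iterate in a very targeted way and avoid the tower-type bound on $\gb$ that a general regularity lemma would produce.

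Maintain a nested sequence $(X_0, Y_0) = (X, Y) \supseteq (X_1, Y_1) \supseteq \cdots$ and track the energy $q_i := d_H(X_i, Y_i)^2 \in [0, 1]$. At step $i$: if $(X_i, Y_i)$ is $\gd$-regular, output it. Otherwise fix a witness $X' \sub X_i$, $Y' \sub Y_i$ with $|X'| \geq \gd |X_i|$, $|Y'| \geq \gd |Y_i|$, and $|d_H(X', Y') - d_H(X_i, Y_i)| \geq \gd$, and consider the four-piece refinement $\{X', X_i \sm X'\} \times \{Y', Y_i \sm Y'\}$. The defect Cauchy--Schwarz identity forces the weighted average of $d^2$ over the four sub-pairs to exceed $q_i$ by at least $\gd^2 \cdot \gd^2 = \gd^4$ (the defect contribution coming from the witness piece). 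Choose $(X_{i+1}, Y_{i+1})$ to be a ``fat'' sub-piece (both dimensions at least $\gd$ times the parent) satisfying $d^2 \geq q_i + \gd^{O(1)}$.

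Since $q_i$ lives in $[0, 1]$ but jumps by $\gd^{O(1)}$ per step, the iteration halts after $\gd^{-O(1)}$ steps; each step shrinks both sides by a factor of at most $\gd$, so the output pair has dimensions at least $\gd^{\gd^{-O(1)}}\cdot n = 2^{-\gd^{-O(1)}}\cdot n$, as required. The main technical obstacle is enforcing the fatness requirement: if the witness $(X', Y')$ nearly exhausts $(X_i, Y_i)$, then the three complementary pieces are all thin, and when $d_H(X', Y') < d_H(X_i, Y_i) - \gd$ the piece $(X', Y')$ itself also fails to increase the energy. This regime is handled by a short case analysis: a direct computation shows the thin complementary strip must then have density $\geq d_H(X_i, Y_i) + \Omega(1)$, which forces $d_H(X_i, Y_i) \leq 1/2 + O(\gd)$; passing to the bipartite complement $\overline{H}$ (which preserves $\gd$-regularity, as noted just above the statement of the lemma) flips the sign of the witness deviation and reduces to the favorable case. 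Routine bookkeeping then yields the claimed bound on $\gb$.
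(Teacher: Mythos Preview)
The paper does not actually supply a proof of this lemma; it is stated as folklore with a pointer to Koml\'os (via the Koml\'os--Simonovits survey).  So there is no in-paper argument to compare against.

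Your overall strategy---an energy/density increment driven by a witness to irregularity, shrinking each side by a factor $\gd^{O(1)}$ at each of $\gd^{-O(1)}$ steps---is the standard route to the single-exponential bound and is essentially what Koml\'os does.

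That said, your handling of the ``bad'' regime has a real gap.  You correctly isolate the obstruction: when the witness $(X',Y')$ nearly exhausts $(X_i,Y_i)$ and has density \emph{below} $d_i$, none of the four sub-rectangles is simultaneously fat and energy-increasing.  Your proposed fix is to pass to $\overline{H}$, where the same witness now has density above the mean.  The difficulty is that this silently changes the potential you are tracking: after the switch you are incrementing $(1-d)^2$ rather than $d^2$, and nothing prevents the same obstruction from recurring in $\overline{H}$ at a later step, forcing a switch back.  Without a single monotone potential the iteration count is unbounded.  (Your intermediate claim that the bad regime forces $d_i\le \tfrac12+O(\gd)$ is essentially correct---it follows from the strip density being at most $1$---but it does not by itself repair the argument, since after complementing one has $\bar d_i\ge \tfrac12-O(\gd)$ and the bad regime can arise again there.)

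A clean repair is to drop the $2\times 2$ refinement and instead show that irregularity always yields a fat sub-pair with strictly larger \emph{density} (not merely larger $d^2$).  If the witness has $d(A,C)\ge d_i+\gd$, take it.  If $d(A,C)\le d_i-\gd$, your own calculation shows $A$ and $C$ cannot simultaneously nearly exhaust $X_i$ and $Y_i$; one then argues via the one-sided splits $\{A,X_i\setminus A\}\times Y_i$ and $X_i\times\{C,Y_i\setminus C\}$ to extract a block with both sides of size $\ge (\gd/4)|\cdot|$ and density $\ge d_i+\gd^{O(1)}$.  Tracking $d_i$ itself then terminates in $\gd^{-O(1)}$ steps, giving $\gb\ge 2^{-\gd^{-O(1)}}$ as required.
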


\begin{cor}\label{ternary}
For each $\gd>0$, $\gb$ as in Lemma~\ref{LKomlos}
and digraph $G=(V,E)$,
there is a partition $L\cup R\cup W$ of $V$ such that $E\cap (L\times R)$ is $\delta$-regular and
$
\min\{|L|,|R|\} \ge \beta|V|/2.
$
\end{cor}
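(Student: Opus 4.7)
The plan is to reduce Corollary~\ref{ternary} to Lemma~\ref{LKomlos} by a trivial preprocessing step. First, partition $V$ arbitrarily into two halves $X$ and $Y$ with $|X|=|Y|=n/2$, where $n=|V|$ (treating sizes as integers, per the paper's convention). Next, define the auxiliary bigraph $H$ on vertex set $X \cup Y$ whose edges are those pairs $\{x,y\}$ with $x\in X$, $y\in Y$, and $(x,y)\in E$. In other words, $H$ records the arcs of $G$ directed from $X$ to $Y$ and discards those directed from $Y$ to $X$; the latter is harmless, since the conclusion only constrains $E \cap (L\times R)$.

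Now apply Lemma~\ref{LKomlos} to $H$ with ground-set parameter $n/2$: since $|X|,|Y| \geq n/2$, the lemma delivers a $\delta$-regular pair $(X',Y')$ with $X'\subseteq X$, $Y'\subseteq Y$, and $|X'|,|Y'| \geq \beta\cdot(n/2) = \beta n/2$. Set $L := X'$, $R := Y'$, and $W := V \setminus (L\cup R)$; this gives the desired tripartition with $\min\{|L|,|R|\} \geq \beta|V|/2$.

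Finally, the bigraph $\delta$-regularity of $H[L,R]$ transfers directly to the digraph $\delta$-regularity of $D := E \cap (L\times R)$ in the extended sense of \eqref{onedirection}: by construction $D$ contains only arcs from $L$ to $R$, so $D \cap (R\times L) = \emptyset$ and \eqref{onedirection} holds trivially, while the density function $d$ agrees on $D$ and $H[L,R]$. There is no substantive obstacle here---the corollary is essentially an immediate consequence of Lemma~\ref{LKomlos}. The only subtlety worth flagging is ensuring \eqref{onedirection} is satisfied when handing a bipartite digraph to a bigraph statement, and this is built into the preprocessing by deleting the $Y$-to-$X$ arcs at the outset.
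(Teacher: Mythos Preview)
Your proof is correct and is essentially identical to the paper's: both take an arbitrary equipartition $X\cup Y$ of $V$, form the bigraph $H$ underlying $G\cap(X\times Y)$, apply Lemma~\ref{LKomlos} to obtain a $\delta$-regular pair $(X',Y')$ of size at least $\beta|V|/2$, and set $(L,R,W)=(X',Y',V\setminus(X'\cup Y'))$. You have simply spelled out in more detail what the paper compresses into one sentence.
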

\begin{proof}
Let $X\cup Y$ be an (arbitrary) equipartition of $V$ and apply Lemma~\ref{LKomlos}
to the undirected graph $H$ underlying the digraph $G\cap (X\times Y)$.\end{proof}

\section{Proof of Theorem~\ref{LMConj}}\label{Proof}

We now assume that $\gs$ drawn from the probability distribution $\pr$ on $\perm{n}$
satisfies \eqref{hyp} and try to show \eqref{conclusion}
(with $\vt $ TBA).  We use $\E$ for expectation w.r.t. $\pr$
and $\mu$ for uniform distribution on $\perm{n}$.

\mn
\emph{Sketch and connection with} \cite{L-M}

We will produce $S_1\dots S_m\sub T$ with $S_i\sub L_i\times R_i$ for some disjoint $L_i,R_i\sub [n]$,
satisfying:

\begin{itemize}
\item[(i)]  with $\|S_i\|:=\min\{|L_i|,|R_i|\}$,
$\sum\|S_i\| =\gO(n\log n)$ (where the implied constant depends on $\eps$);

\item[(ii)]  each $S_i$ is $\gd$-regular (with $\gd=\gd_\eps$ TBA);

\item[(iii)]  for all $i < j$, either $(L_i\cup R_i)\cap(L_j\cup R_j)=\0$ or $L_j\cup R_j$ is contained in one of $L_i,R_i$
(note this implies the $S_i$'s are disjoint).
\end{itemize}

\mn
Let $A_i=\{\fit(\gs,S_i)>\eps |S_i|\}$ and
$Q=\{\sum\{\|S_i\|:A_i ~\text{occurs}\}   =\gO(n\log n)\}$.
The main points are then:

\begin{itemize}
\item[(a)] $\pr (Q)$ is bounded below by a positive function of $\varepsilon$.
(This is just (i) together with a couple applications of Markov's
Inequality.)
\item[(b)] Regularity of $S_i$ implies $\mu(A_i) \leq \exp[-\gO(\|S_i\|)]$.
\item[(c)] Under (iii), for any $I\sub [m]$,
\[
\mbox{$\mu(\cap_{i\in I}A_i) < \exp[-\sum_{i\in I}\gO(\|S_i\|)]$}
\]
(a weak version of independence of the $A_i$'s under $\mu$).
\end{itemize}

\mn
And these points easily combine to give \eqref{conclusion}
(see \eqref{Hsigma} and \eqref{main}).

\mn

For the transitive case in \cite{L-M} most of this argument is
unnecessary; in particular, regularity disappears and there is a natural \emph{decomposition} of
$T$ into $S_i$'s:  Supposing $T=\{ab:a<b\}$ and (for simplicity) $n=2^k$, we may
take the $S_i$'s to be the sets $L_i\times R_i$ with $(L_i,R_i)$ running over pairs
\beq{TD}
([(2s-2)2^{-j}n+1,(2s-1)2^{-j}n],[(2s-1)2^{-j}n+1,2s2^{-j}n]),
\enq
with $j\in [k]$ and $s\in [2^{j-1}]$.
(As mentioned earlier, this decomposition of the (identity) permutation $(1\dots n)$ also
provides the framework for \cite{delavega}.)
After some translation, our argument (really, a fairly small subset thereof) then specializes to
essentially what's
done in \cite{L-M}.\qed

\bn

Set $\gd = .03 \eps$ and
let $\gb$ be half the $\gb$ of Lemma~\ref{LKomlos} and
Corollary~\ref{ternary}.
We use the corollary to find a rooted tree $\cal T$
each of whose internal nodes has degree (number of children) 2 or 3,
together with
disjoint
subsets $S_1,S_2\dots S_m$ of (the arc set of) $T$, corresponding
to the internal nodes of $\T$.
The nodes of $\cal T$ will be subsets of $[n]$
(so the \emph{size,} $|U|$, of a node $U$ is its size as a set).

To construct $\T$, start with root
$V_1=[n]$ and
repeat the following for $k=1,\ldots$
until each unprocessed node has size less than (say) $\ttt:=\sqrt{n}$.
Let $V_k$ be an
unprocessed node of size at least $\ttt$
and apply Corollary~\ref{ternary} to $T[V_k]$ to produce
a partition $V_k= L_k\cup R_k\cup W_k$,
with $|L_k|, |R_k|>\gb |V_k|$ and
$S_k:=T\cap (L_k\times R_k)$
$\gd$-regular of density at least 1/2.
(Note \eqref{comp} says we can reverse the roles of $L_k$ and $R_k$
if the density of $T\cap (L_k\times R_k)$ is less than 1/2.)
Add $L_k,R_k, W_k$ to $\T$ as the children of $V_k$ and
mark $V_k$  ``processed."
(Note the $V_k$'s are the \emph{internal} nodes of $\T$; nodes of
size less then $\ttt$ are not processed and are automatically leaves.
Note also that there is no restriction on $|W_k|$ and
that, for $k>1$, $V_k$ is equal to one of $L_i$, $R_i$, $W_i$ for some $i<k$.)

Let $m$ be the number of internal nodes of $\T$ (the final tree).
Note that the leaves of $\T$ have size at most $\ttt$ and that
the $S_i$'s satisfy (ii) and (iii) of the proof sketch; that they also satisfy
(i) is shown by the next lemma.

Set
\[
\mbox{$\gL =\sum_{i=1}^m |V_i| $;}
\]
this quantity will play a central role in what follows.

\begin{lemma}\label{sum |V_i|}
$\gL \geq \frac{1}{2}n\log_3n$     
\end{lemma}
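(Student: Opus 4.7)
The approach is to reinterpret $\gL$ as a weighted depth sum over the leaves of $\T$.  For each $x \in [n]$, the internal nodes $V_k$ containing $x$ are precisely the proper ancestors (in $\T$) of the leaf to which $x$ belongs, so
\[
\gL \;=\; \sum_{x\in [n]} \ell(x) \;=\; \sum_{U \text{ leaf of }\T} |U|\,d(U),
\]
where $d(U)$ is the depth of $U$ in $\T$.  This recasts the lemma as a purely combinatorial statement about $\T$: whenever a rooted tree with branching factor at most $3$ has its leaves partitioning $[n]$ into pieces each of size $<\sqrt n$, one has $\sum_U |U|d(U) \geq \tfrac12 n\log_3 n$.

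This last inequality is a standard source-coding bound.  Kraft's inequality applied to the at-most-ternary $\T$ gives $\sum_U 3^{-d(U)} \leq 1$, and then Gibbs' inequality (with auxiliary distribution proportional to $3^{-d(U)}$) yields
\[
\sum_U p_U\, d(U) \;\geq\; -\sum_U p_U\log_3 p_U \;=:\; H_3(p), \qquad p_U:= |U|/n.
\]
Because each leaf has size $<\sqrt n$ we have $p_U < 1/\sqrt n$ and hence $\log_3(1/p_U) > \tfrac12\log_3 n$, so $H_3(p) > \tfrac12\log_3 n$; multiplying by $n$ gives the claim.  (One may also bypass Kraft entirely via a short induction on subtree size, proving the slightly sharper $\sum_{U\subseteq V}|U|\,d_V(U) \geq |V|\log_3(|V|/\sqrt n)$ for every subtree rooted at $V$; the inductive step reduces to the elementary fact that the base-$3$ entropy of any distribution on at most $3$ atoms is $\leq 1$, which is exactly what is needed to absorb the $|V|$ contributed by the root.)

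I do not anticipate any genuine obstacle here.  The only features of the construction that enter the argument are that every internal node of $\T$ has at most $3$ children and that every leaf has size $<\sqrt n$; the density or regularity of the $S_k$'s, and the particular roles played by $L_k$, $R_k$, $W_k$, are irrelevant for this lemma and will matter only in subsequent steps of the proof of Theorem~\ref{LMConj}.
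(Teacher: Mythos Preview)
Your argument is correct and is essentially the paper's own proof.  Both proofs first rewrite $\gL=\sum_i|V_i|$ as $\sum_{U\text{ leaf}}|U|\,d(U)$ via the same double count, and then derive the lower bound from the entropy inequality $\sum p_U d(U)\geq H_3(p)$ together with $p_U<n^{-1/2}$; the only cosmetic difference is that the paper obtains $q_i\geq 3^{-d_i}$ from the random-walk distribution on leaves and invokes nonpositivity of relative entropy, whereas you cite Kraft's inequality and Gibbs' inequality by name---the same inequality in different clothing.
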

\nin
\begin{proof}
This will follow easily from the next general (presumably known) observation, for which we assume
$\T $ is a tree satisfying:

\begin{itemize}

\item
the nodes of $\T $ are subsets of $S$, an $s$-set
which is also the root of $\T $;

\item
the children of each internal node $U$ of $\T $ form a partition of $U$ with at most $b$ blocks;

\item
the leaves of $\T $ are $U_1\dots U_r$, with $|U_i|=u_i\leq t$ (any $t$) and depth $d_i$.

\end{itemize}

\begin{lemma}\label{Ltree}
With the setup above, $\sum u_id_i\geq s\log_b(s/t)$.
\end{lemma}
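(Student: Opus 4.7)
The plan is to recognize this as a Kraft/Shannon-type inequality for a prefix code (or equivalently a path-length lemma for search trees). I would give either an information-theoretic argument or a direct induction on the tree; both are short, and I'd probably include whichever is cleaner.

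The information-theoretic route: Since the leaves of $\T$ partition the root $S$, the numbers $p_i := u_i/s$ form a probability distribution on $[r]$. The hypothesis $u_i \leq t$ gives $p_i \leq t/s$, so $-\log_b p_i \geq \log_b(s/t)$ for every $i$, hence the base-$b$ entropy satisfies
\[
H_b(p) \;=\; -\sum_i p_i \log_b p_i \;\geq\; \log_b(s/t).
\]
Because each internal node of $\T$ has at most $b$ children, the map sending a leaf to its root-to-leaf address is an at-most-$b$-ary prefix code, so Kraft's inequality gives $\sum_i b^{-d_i}\leq 1$. The standard Gibbs/Shannon source coding bound then yields $\sum_i p_i d_i \geq H_b(p)$. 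Multiplying by $s$ gives the claim.

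Alternatively, a direct induction on $\T$ works cleanly. Base case: if the root is itself a leaf, then $s\leq t$, so $\log_b(s/t)\leq 0$ while $\sum u_i d_i = 0$. For the inductive step, let the root have children $C_1,\ldots,C_k$ with $|C_j|=c_j$, $\sum_j c_j = s$ and $k\leq b$. Applying the inductive hypothesis to each subtree (whose depths, seen from its own root, are $d_i - 1$) and then adding back $1$ per leaf gives
\[
\sum_i u_i d_i \;\geq\; \sum_j c_j\log_b(c_j/t) + s.
\]
So it suffices to show $\sum_j c_j \log_b c_j \geq s\log_b(s/b)$, which is immediate from convexity of $x\mapsto x\log x$ and $k\leq b$ (Jensen: $\sum_j c_j\log_b c_j \geq s\log_b(s/k) \geq s\log_b(s/b)$).

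I don't anticipate a serious obstacle: the statement is essentially Shannon's noiseless coding theorem specialized to an integer-weighted alphabet, and the only mild point to be careful about is that the $u_i$ can be arbitrary positive integers rather than $1$, which is handled by weighting the probability distribution by $u_i/s$ (or equivalently by carrying the extra $+s$ term through the induction).
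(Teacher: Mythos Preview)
Your information-theoretic route is essentially the paper's proof: the paper sets $p_i=u_i/s$ and lets $q_i$ be the random-walk measure on the leaves (so $q_i\ge b^{-d_i}$, which is exactly your Kraft inequality), then invokes nonpositivity of $D(p\|q)$ (your Gibbs step) together with $p_i\le t/s$ to obtain $\sum p_i d_i\log b\ge \log(s/t)$. Your inductive alternative is also correct and simply unwinds the same convexity inequality one level at a time.
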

\nin
(Of course this is exact if $\T $ is the complete $b$-ary tree of depth $d$
and all leaves have size $2^{-b}s$).

\begin{proof}
Recall that the \emph{relative entropy} between probability
distributions $p$ and $q$ on $[r]$ is
\[
D(p\|q)=\sum p_i\log (q_i/p_i)\leq 0
\]
(the inequality given by the concavity of the logarithm).
We apply this with $p_i=u_i/s$ and
$q_i$
the probability that the ordinary random walk down the tree ends at $u_i$.
In particular $q_i \geq b^{-d_i}$, which, with
nonpositivity of $D(p\|q)$ and the assumption $u_i\leq t$, gives
\begin{eqnarray*}
\sum (u_i/s)d_i\log b &\geq &
\sum(u_i/s)\log (1/q_i)\\
&\geq &\sum(u_i/s)\log (s/u_i) \geq \log (s/t).
\end{eqnarray*}
The lemma follows.\qedhere
\end{proof}

This gives Lemma~\ref{sum |V_i|}
since $\sum |V_i|=\sum_U|U|d(U)$, with $U$ ranging over leaves of $\T$
(and $d(\cdot)$ again denoting depth).\qedhere

\end{proof}

\begin{lemma}\label{m}
The number m of internal nodes of $\T$ is less than $n$.
\end{lemma}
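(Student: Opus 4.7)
The plan is to exploit two standard facts about $\T$: its leaves form a partition of $[n]$ into nonempty blocks, and every internal node has at least two children. The bound $m < n$ will then follow from a one-line double-count of edges.

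First I would observe that $\T$, by construction, has leaves forming a partition of the root $V_1=[n]$: whenever $V_k$ is processed, its children $L_k, R_k$, and (when present) $W_k$ partition $V_k$, so by induction on depth the leaves partition $V_1$. By Corollary~\ref{ternary}, both $L_k$ and $R_k$ have size at least $\beta|V_k| > 0$, so they are always nonempty; an empty $W_k$ is simply not counted as a child (which is why internal nodes may have degree $2$ rather than $3$). Hence every leaf has size at least $1$, and the number $\ell$ of leaves satisfies $\ell\le n$.

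Next, since each internal node of $\T$ has at least two children, a standard edge count gives $m\le \ell-1$: the number of non-root nodes is $m+\ell-1$, and this equals the sum of the degrees of the internal nodes, which is at least $2m$. Rearranging and combining with $\ell\le n$ yields $m\le \ell-1\le n-1<n$, as required.

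There is no real obstacle here: the lemma is a purely combinatorial observation about the shape of $\T$, distinct in character from the more quantitative estimates (e.g.\ Lemma~\ref{sum |V_i|}) on which the rest of the argument relies.
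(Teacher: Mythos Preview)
Your proof is correct and essentially the same as the paper's: both bound the number of leaves $\ell$ by $n$ (using that the leaves partition $[n]$ into nonempty blocks) and then use that every internal node has at least two children to conclude $m\le \ell-1$. The paper phrases the second step via the identity $\ell = 1 + \sum_w (b(w)-1)$ (summing over internal nodes $w$), but your edge-count is the same computation rearranged.
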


\begin{proof}
A straightforward induction shows that the number of leaves of a rooted tree
is $1+ \sum (b(w)-1)$, where $w$ ranges over internal nodes and $b$ denotes number of children.
The lemma follows since here the number of leaves is at most $n$
(actually at most $3\sqrt{n}$) and each $d(w)$ is at least 2.
\end{proof}

\mn

Recalling that
$
A_i= \{\sigma \in \perm{n}\, : \fit(\sigma,S_i)\ge \eps|S_i|\}
$
and that $\E$ refers to $\pr$, we have 
$
\mean[\fit(\sigma,S_i)] \ge 2\eps|S_i|,
$
which with
\[
\mean[\fit(\sigma,S_i)] \leq \pr(A_i) |S_i| + (1-\pr(A_i))\eps |S_i|
\leq (\pr(A_i) +\eps) |S_i|
\]
gives $\pr(A_i)\ge \eps$ (essentially Markov's Inequality applied to $|S_i|-\fit(\sigma,S_i)$).

\mn

Set $\xi_i=|V_i|\boldsymbol 1_{A_i}$ and $\xi=\sum_i \xi_i$,
and let $Q$ be the event $\{\xi \ge \eps \gL /2\}$. Then
$
\mean[\xi_i] = |V_i|\pr(A_i) \ge  \eps|V_i|,
$
implying $\mean[\xi]=\sum\mean[\xi_i] \ge \eps \gL,$ and (since $\xi_i\leq |V_i|$) $\xi\le \gL$;
so using Markov's Inequality as above gives $\pr(Q) \geq \eps /2$.

\mn

Thus, with $\gs$ chosen from $\perm{n}$ according to $\pr$, we have
\begin{align}\label{Hsigma}
H(\gs) &\leq 1 + (1-\pr(Q))\log n! + \pr(Q) \log |Q| \nonumber\\
&= 1 + \log n! + \pr(Q) \log \mu(Q) \leq 1 + \log n! +(\eps /2) \log \mu(Q)
\end{align}
(recall $\mu$ is the uniform measure on $\perm{n}$).

\mn

Let
\[
\J=\{I\sub [m]: \textstyle \sum_{i\in I} |V_i| \ge \eps \gL /2\}
\]
and, for $I\in \cal J$, let $A_I = \cap_{i \in I} A_i$.  Set
\beq{bc}
\mbox{$b= \eps^2\gd\gb^3/33$}
\enq
(see \eqref{PrBi} for
the reason for the choice of $b$).
We will show, for each $I\in \J$,
\beq{main}
\mu(A_I) \le e^{-b \eps \gL /2},
\enq
which implies
\[
\log \mu(Q) = \log \mu( \cup_{I \in \J} A_I) 
\leq \log |\J| - (b \varepsilon \Lambda \log e) /2 \leq n - (b \varepsilon \Lambda \log e )/2,
\]
the second inequality following from $|\J| \leq 2^{m}$ together with Lemma \ref{m}.  With $c = \eps^\pat{3}\gd\gb^3/150 < (b \eps \log_3 e )/4$, this bounds (for large $n$) the r.h.s.\ of \eqref{Hsigma} 
by
\[
(1- \eps c/2) \log n!,
\]
which proves Theorem~\ref{LMConj} with $\vt = \eps^{4} \delta \beta^3 / 300
= \exp[-\eps^{-O(1)}]$.
\qed

\bn

The rest of our discussion is devoted to the proof of \eqref{main}.  
For a digraph $D\sub L\times R$ with $L,R$ disjoint subsets of $V$, say a pair
$(X,Y)$ of disjoint subsets of $[n]$ with $|X|=|L|$, $|Y|=|R|$ is {\em safe} for $D$
if
\beq{tau}
\fit(\tau, D) < \eps |L||R|/4
\enq
for every bijection $\tau: L\cup R\ra X\cup Y$ with $\tau(L)=X$
(where $\fit(\tau, D)$ has the obvious meaning).
We also say $\sigma\in \perm{n}$ is {\em safe} for $D$ if
$(\gs(L),\gs(R))$ is.
Note that since $S_i$ has density at least 1/2 in $L_i\times R_i$,
the $\gs$'s in $A_i$ are unsafe for $S_i$.

\begin{lemma}\label{Lfit}
Assume the above setup with $|L|+|R|=l$ and $|L|=\gc l$,
and set $\gl=2\gd$ and $\gz = \eps\gd\gc(1-\gc)/4$.
Let $I_1\cup\cdots \cup I_r$ be the natural partition of $X\cup Y$
into intervals of size $\gl l $.
If D is $\gd$-regular
and
\beq{IXY}
|X\cap I_j|= (\gc\gl  \pm \gz)l  ~~\forall j\in [r ],
\enq
then $(X,Y)$ is safe for $D$.
\end{lemma}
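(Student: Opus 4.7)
The plan is to decompose $\fit(\tau, D)$ by tracking which interval $I_j$ each arc endpoint's $\tau$-image lands in. For $j \in [r]$ set $L_j = \tau^{-1}(X \cap I_j)$ and $R_j = \tau^{-1}(Y \cap I_j)$, so that $|L_j| = |X \cap I_j|$ and $|R_j| = |Y \cap I_j|$. Each arc $uv \in D$ (with $u \in L$, $v \in R$) has $u \in L_j$, $v \in R_k$ for a unique pair $(j, k)$, contributing $+1$ to $\fit$ if $j < k$, $-1$ if $j > k$, and $\pm 1$ if $j = k$; hence
\[
\fit(\tau, D) \;=\; \sum_{j < k} \bigl(|D[L_j, R_k]| - |D[L_k, R_j]|\bigr) + E, \qquad |E| \le \sum_j |D[L_j, R_j]|.
\]

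Write $|X \cap I_j| = (\gc\gl + \alpha_j) l$; by \eqref{IXY} we have $|\alpha_j| \le \gz$, and $\sum_j \alpha_j = 0$ because $|X| = \gc l = r\gc\gl l$. From the definition of $\gz$ (using $\eps \le 1$) we get $\gz < \gd\gc$ and $\gz < \gd(1-\gc)$, which together with $\gl = 2\gd$ gives $|L_j| > \gd|L|$ and $|R_j| > \gd|R|$ for every $j$. So $\gd$-regularity applies to every pair $(L_j, R_k)$ and yields $|D[L_j, R_k]| = (d \pm \gd)|L_j||R_k|$, where $d = d_D(L, R) \le 1$. A direct expansion shows $|L_j||R_k| - |L_k||R_j| = \gl(\alpha_j - \alpha_k) l^2$, so the signed off-diagonal part is bounded by
\[
\Bigl|d \sum_{j < k} (|L_j||R_k| - |L_k||R_j|)\Bigr| \;\le\; \gl l^2 \cdot r^2 \gz \;=\; \gz l^2/\gl \;=\; \eps|L||R|/8,
\]
using $r\gl = 1$ and $|L||R| = \gc(1-\gc)l^2$. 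The regularity slack contributes at most $\gd \sum_{j \ne k} |L_j||R_k| \le \gd|L||R|$, since $\sum_{j,k} |L_j||R_k| = |L||R|$. Expanding $|L_j||R_j|$ in $\alpha_j$ and invoking $\sum_j \alpha_j = 0$ gives $\sum_j |L_j||R_j| \le \gc(1-\gc)\gl l^2 = \gl|L||R|$, so the diagonal is bounded by $(d+\gd)\gl|L||R| \le (1+\gd)\gl|L||R|$.

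Adding the three bounds and substituting $\gd = 0.03\eps$, $\gl = 2\gd$ gives
\[
|\fit(\tau, D)| \;\le\; \eps|L||R|\bigl(\tfrac{1}{8} + \gd/\eps + 2\gd(1+\gd)/\eps\bigr) \;<\; \eps|L||R|/4,
\]
which holds for every eligible $\tau$ and thus proves safety. The main obstacle is purely bookkeeping: carefully tracking the $\gc(1-\gc)$ factor when translating between $l^2$ and $|L||R|$, and verifying that the tight numerical choices of $\gl$ and $\gz$ leave just enough slack. The underlying idea is simply ``decompose by intervals and apply $\gd$-regularity.''
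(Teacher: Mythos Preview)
Your proof is correct and follows essentially the same approach as the paper: partition $L$ and $R$ by the intervals $I_j$, apply $\gd$-regularity to every cross pair, and bound diagonal and off-diagonal contributions separately. Your bookkeeping is slightly cleaner---using the algebraic identity $|L_j||R_k|-|L_k||R_j|=\gl(\alpha_j-\alpha_k)l^2$ together with $\sum_j\alpha_j=0$ in place of the paper's per-summand bound---but the two arguments are minor variants of one another.
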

\nin
(Of course an {\em interval} of $Z=\{i_1<\cdots < i_u\}$ is one of the sets
$\{i_s\dots i_{s+t}\}$.)

\begin{proof}
For $\tau$ as in the line after \eqref{tau}, let $L_j=L\cap \tau^{-1}(I_j)$ and $R_j=R\cap \tau^{-1}(I_j)$
($j\in [r ]$).
Then
\beq{fitTauD}
|\fit(\tau,D)|\leq
\sum_{1\leq i<j\leq r } ||D\cap (L_i\times R_j)|-|D\cap (L_j\times R_i)|| + \gc(1-\gc)\gl l^2.
\enq
Here the last term is an upper bound on the contribution of
pairs contained in the $I_j$'s:  if $|L_j|=\gc_j|I_j|=\gc_j\gl l$
(so $|R_j| =(1-\gc_j)\gl l$ and $\sum\gc_j = \gc /\gl$),
then
\[
\mbox{$\sum\gc_j(1-\gc_j) \leq \sum \gc_j-(\sum\gc_j)^2/r   = (\gc-\gc^2)/\gl$}
\]
gives
\[
\mbox{$\sum|L_j||R_j| = \sum \gc_j(1-\gc_j)\gl^2l^2 \leq \gc(1-\gc)\gl l^2.$}
\]

On the other hand, regularity and \eqref{IXY}
(which implies $|L_i|> \gd |L|$ ($=\gd\gc l$) since $\gc \gl -\gz>\gc \gd$,
and similarly $|R_i|> \gd |R|$)
give, for all $i\neq j$,
\[
|D\cap (L_i\times R_j)|
=
(d\pm \gd)|L_i||R_j| ,
\]
where $d$ is the density of $D$.
Combining this with \eqref{IXY}
bounds each of the summands in \eqref{fitTauD} by
\[
[(d+\gd)(\gc \gl+\gz)((1-\gc)\gl+\gz)-
(d-\gd)(\gc \gl-\gz)((1-\gc)\gl-\gz)]l^2~~~~
\]
\[
~~~~~~~~~~~~~~~~~~~~~~~~~~~~~~=2[\gl\gz d +\gd(\gc(1-\gc)\gl^2 +\gz^2)]l^2
\]
and the r.h.s.\ of \eqref{fitTauD} by
\[
\left\{2\Cc{r }{2}[\gl\gz d +\gd(\gc(1-\gc)\gl^2 +\gz^2)] + \gc(1-\gc)\gl\right\}l^2
< \eps \gc (1-\gc)l^2/4.
\]
(The main term on the l.h.s.\ is the one with $\gl \gz d$, which, since $r ^{-1}=\gl=2\gd$,
is less than half the r.h.s.  The second and third terms are much smaller
(the second since $\gd$ is much smaller than $\eps$).)
\end{proof}

\begin{cor}\label{LChernoff}
For D and parameters as in Lemma~\ref{Lfit}, and $\sigma$ uniform from $\perm{n}$,
\[   
\Pr(\mbox{$\gs$ is unsafe for $D$}) < 2r \exp[- 2\gz^2l/\gl].
\]   
\end{cor}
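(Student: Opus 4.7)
The plan is to combine Lemma~\ref{Lfit} with a union bound and a Hoeffding-type concentration inequality for sampling without replacement. First, I invoke the contrapositive of Lemma~\ref{Lfit}: writing $(X,Y) = (\sigma(L), \sigma(R))$, if $\sigma$ is unsafe for $D$, then condition \eqref{IXY} must fail, i.e.\ there exists some $j \in [r]$ with $\bigl||X\cap I_j| - \gamma\lambda l\bigr| > \zeta l$. A union bound over $j$ then reduces the task to showing, for each fixed $j$,
\[
\Pr\bigl(\bigl||X\cap I_j| - \gamma\lambda l\bigr| > \zeta l\bigr) < 2\exp[-2\zeta^2 l/\lambda].
\]

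Next I would condition on the set $Z := X \cup Y = \sigma(L\cup R)$. Under uniform $\sigma$, this $Z$ is a uniformly random $l$-subset of $[n]$, and it determines the natural intervals $I_1, \dots, I_r$ of size $\lambda l$. Conditionally on $Z$, the set $X = \sigma(L)$ is a uniformly random $(\gamma l)$-subset of $Z$, so $|X\cap I_j|$ follows the hypergeometric distribution: the number of ``successes'' when $\gamma l$ elements are drawn without replacement from a population of size $l$ containing $\lambda l$ distinguished elements (the elements of $I_j$). Its mean is exactly $\gamma\lambda l$, matching the center in the desired concentration bound.

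Finally, I apply Hoeffding's inequality for sampling without replacement. Since the hypergeometric distribution is symmetric in its two sample sizes ($\gamma l$ and $\lambda l$), one can pick whichever gives a better bound; here, using sample size $\lambda l$ in the standard form $\Pr(|Y - \mathbb{E} Y|\ge t n) \le 2\exp(-2t^2 n)$ with $n = \lambda l$ and $tn = \zeta l$ (so $t = \zeta/\lambda$) yields exactly $2\exp[-2\zeta^2 l/\lambda]$. Combined with the union bound over $j \in [r]$, this gives the claimed $2r\exp[-2\zeta^2 l/\lambda]$.

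The main (minor) obstacle is just being careful to select the symmetric form of Hoeffding's inequality that produces $\lambda$ rather than $\gamma$ in the denominator of the exponent; everything else is bookkeeping.
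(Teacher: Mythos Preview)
Your proposal is correct and matches the paper's proof essentially line for line: condition on $X\cup Y$ to fix the intervals, note that $|X\cap I_j|$ is then hypergeometric with mean $\gamma\lambda l$, apply Hoeffding's inequality in its hypergeometric form, and union bound over $j\in[r]$. Your remark about exploiting the symmetry of the hypergeometric to get $\lambda$ rather than $\gamma$ in the exponent is a helpful clarification the paper leaves implicit in its citation of Hoeffding's bound.
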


\begin{proof}
Let $(X,Y)=(\gs(L),\gs(R))$.
Once we've chosen $X\cup Y$ (determining $I_1\dots I_r$),
$2 \exp[- 2\gz^2l/\gl]$ is the usual Hoeffding
bound \cite[Eq.\ (2.3)]{Hoef}
on the
probability that $X$ violates
\eqref{IXY} for a given $j$.
(The bound may be more familiar when elements of $X\cup Y$ are in $X$ independently, but
also applies to the \emph{hypergeometric} r.v.\ $|X\cap I_j|$;
see e.g.\ \cite[Thm. 2.10 and (2.12)]{JLR}.)
\end{proof}

\begin{proof}[Proof of \eqref{main}.]
Let
\[
B_i= \{\sigma \in \perm{n}\, : \, \mbox{$\sigma$ is unsafe for $S_i$}\}
\]
and $B_I=\cap_{i\in I}B_i$.
Then $A_i\sub B_i$ (as noted above) and (therefore) $A_I\sub B_I$.
Moreover---perhaps the central point---the $B_i$'s are independent,
since $B_i$ depends only on the relative positions of $\gs(L_i)$ and $\gs(R_i)$
within $\gs(V_i)$.

On the other hand, Corollary~\ref{LChernoff}, applied with $D=S_i$ (so $L=L_i$, $R=R_i$, $l=|L_i|+|R_i|$
and $\gc = |L_i|/l\in (\gb,1-\gb)$)
gives
\begin{eqnarray}\label{PrBi}
\Pr(B_i) &<& 2r \exp[- 2\gz^2l/\gl]
< 2r \exp[-\eps ^2 \gd \gb^2 l / 64] \nonumber\\
&<& 2r \exp[-\eps ^2 \gd \gb^3 |V_i| / 32] < e^{-b |V_i|} .
\end{eqnarray}
(Recall $b$ was defined in \eqref{bc}; since we assume $|V_i|$ is large ($|V_i| > \ttt=\sqrt{n}$),
the choice
leaves a little room to absorb the $2r $.)
And of course \eqref{PrBi} and the independence of the $B_i$'s give \eqref{main}.
\end{proof}

\section{Back to the transitive case}\label{sec:transitive}

Theorem~\ref{thm:transitive} is an easy consequence of the next observation.

\begin{lemma}\label{MPC}
Let $\YY$ a random
$m$-subset of $[2m]$ satisfying
\beq{LMhyp}
\E|\{(a,b):a<b,  a\in [2m]\sm \YY,b\in\YY\}| > (\tfrac{1}{2}+\eps)m^2.
\enq
Then $H(\YY) < (1-\eps^2/8)2m$.
\end{lemma}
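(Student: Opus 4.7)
The plan is to reduce everything to a bound on $\sum_i H(\alpha_i)$, where $\alpha_i = \pr(i \in \YY)$, and then combine Cauchy--Schwarz with a quadratic upper bound on the binary entropy function.

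First I would invoke subadditivity of entropy for the indicators $X_i = \1_{i \in \YY}$:
\[
H(\YY) = H(X_1,\ldots,X_{2m}) \;\le\; \sum_{i=1}^{2m} H(\alpha_i),
\]
noting the side constraint $\sum_i \alpha_i = m$ forced by $|\YY| = m$. Next I would translate \eqref{LMhyp} into a linear constraint on the $\alpha_i$. Writing $\YY = \{y_1 < \cdots < y_m\}$, a direct count gives $|\{(a,b): a<b,\, a\notin \YY,\, b\in \YY\}| = \sum_k (y_k - k) = \sum_{b \in \YY} b - \binom{m+1}{2}$, so \eqref{LMhyp} becomes $\sum_b b\,\alpha_b > (\tfrac{1}{2}+\eps)m^2 + \binom{m+1}{2}$. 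Setting $\beta_i = 2\alpha_i - 1 \in [-1,1]$ (so $\sum \beta_i = 0$), this simplifies to
\[
\sum_{i=1}^{2m} (i - m)\,\beta_i \;>\; 2 \eps m^2.
\]

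Then Cauchy--Schwarz, using $\sum_{i=1}^{2m}(i-m)^2 = m(2m^2+1)/3$, gives $\sum_i \beta_i^2 > 6 \eps^2 m (1 - o(1))$. Finally, Pinsker's inequality coordinatewise ($1 - H(\alpha_i) \ge (2\alpha_i - 1)^2/(2\ln 2) = \beta_i^2/(2\ln 2)$) yields
\[
H(\YY) \;\le\; \sum_i H(\alpha_i) \;\le\; 2m - \frac{1}{2\ln 2} \sum_i \beta_i^2 \;<\; 2m - \frac{3 \eps^2 m}{\ln 2}(1 - o(1)),
\]
which is comfortably below $(1 - \eps^2/8)\cdot 2m$ since $3/\ln 2 > 1/4$.

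I do not expect any genuine obstacle: each step (subadditivity, combinatorial rewrite, Cauchy--Schwarz, Pinsker) is standard, and the Pinsker estimate leaves about an order of magnitude of slack, which conveniently absorbs the $(1-o(1))$ factor and justifies taking $m$ large. If one wanted the sharpest constant obtainable along these lines, the natural refinement is to Lagrange-optimize $\sum_i H(\alpha_i)$ subject to the two linear constraints on $\sum_i \alpha_i$ and $\sum_i i\alpha_i$, for which the maximizer is a sigmoid $\alpha_i = (1+e^{\lambda + \nu i})^{-1}$; this does not appear to be necessary for the stated inequality.
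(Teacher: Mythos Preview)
Your proof is correct and follows essentially the same route as the paper's: subadditivity of entropy, the identical combinatorial rewrite of \eqref{LMhyp} into a linear constraint on the marginals, the quadratic bound $1-H(1/2+\delta)\ge c\,\delta^2$, and Cauchy--Schwarz to lower-bound $\sum\delta^2$; your Cauchy--Schwarz step (centering via $\sum\beta_i=0$ and pairing against $i-m$) is a touch sharper than the paper's, which instead uses $b\le 2m$ and restricts to positive $\delta_b$. One small point to tighten: the lemma is applied for all $m\ge 1$ (down to $m_i=1$ in the decomposition \eqref{TD}), so replace your $(1-o(1))$ by the exact $\sum_i\beta_i^2 > 12\eps^2 m^3/(2m^2+1)\ge 4\eps^2 m$, which still leaves far more than the needed $\tfrac{\ln 2}{2}\,\eps^2 m$.
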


\mn

To get Theorem~\ref{thm:transitive} from this, let
$T=\{ab:a<b\}$ and, for simplicity, $n=2^k$, and decompose
$T= \bigcup (L_i\times R_i)$ as in \eqref{TD}.
For each $i$, say with $|L_i| $ ($=|R_i|$) $= m_i$, let
$\YY_i\sub [2m_i]$ consist of the indices of positions within $\gs(L_i\cup R_i)$
occupied by $\gs(R_i)$; that is, if $\gs(L_i\cup R_i) =\{j_1<\cdots < j_{2m_i}\}$,
then $\YY_i=\{l:j_l\in \gs(R_i)\}$.
Then Lemma~\ref{MPC} (its hypothesis provided by \eqref{hyp}) gives
\[
H(\YY_i)\leq (1-\eps^2/8)2m_i;
\]
so, since $\gs$ is determined by the $\YY_i$'s, we have
\[
\mbox{$H(\gs)\leq \sum H(\YY_i) \leq (1-\eps^2/8)\sum (2m_i) = (1-\eps^2/8)n\log n.$}  \qquad \qquad \quad \, \qed
\]

\mn
\emph{Remark.}
Note that the $\gO(\eps^2)$ of Theorem~\ref{thm:transitive} is the best
one can do without more fully exploiting \eqref{hyp} (that is, beyond \eqref{LMhyp}
for the $(L_i,R_i,Y_i)$'s, which is all we are using).

\begin{proof}[Proof of Lemma~\ref{MPC}.]
For $a\in [2m]$, set $\pr(a\in \YY) = 1/2 +\gd_a$.  Then
\[
\mbox{$H(\YY) \leq \sum_aH(1/2+\gd_a)\leq \sum_a(1-2\gd_a^2)$}
\]
(where the 2 could actually be $2\log e$);
so it is enough to show
\[
\mbox{$\sum\gd_a^2\geq \eps^2m/8.$}
\]

For a given $m$-subset $Y$ of $[2m]$, we have
\begin{eqnarray*}
f(Y)&:= &\mbox{$|\{(a,b):a<b,  a\in [2m]\sm Y,b\in Y\}|$}\\
&=& \mbox{$\sum_{b\in Y}(b-1) -\C{m}{2} =\sum_{b\in Y} b -\C{m+1}{2}$}.
\end{eqnarray*}
(the first sum counts pairs $(a,b)$ with $a<b$ and $b\in Y$,
and $\C{m}{2}$ is the number of such pairs with $a$ also in $Y$);
so we have
\[
\mbox{$(\frac{1}{2} +\eps)m^2 < \E f(\YY) = \sum (\frac{1}{2}+\gd_b)b -\C{m+1}{2}
=\sum \gd_bb +m^2/2$},
\]
implying $\sum \gd_b b> \eps m^2$.
Combining this with $2m\sum_{\gd_b>0}\gd_b\geq \sum\gd_bb$, we have
$\sum_{\gd_b>0}\gd_b > \eps m/2$ and then, using Cauchy-Schwarz,
\[
\mbox{$\sum\gd_b^2\geq \sum_{\gd_b>0} \gd_b^2\geq \frac{1}{2m}(\eps m/2)^2 =\eps^2m/8.$}\qedhere
\]
\end{proof}


\begin{thebibliography}{99}

\bibitem{delavega} W. Fernandez de la Vega,
On the maximal cardinality of a consistent set of arcs in a random tournament,
{\em J. Comb. Th. Series B} (1983), 328-332.


\bibitem{EM} P. Erd\H{o}s and J. Moon,
On sets of consistent arcs in a tournament,
\emph{Canad. Math. Bull.} \textbf{8} (1965), 269-271.

\bibitem{Hoef} W.\ Hoeffding, Probability inequalities for sums of bounded random variables,
\emph{J.\ Amer.\ Statistical Assoc.} \textbf{58} (1963), 13-30.

\bibitem{JLR}
S. Janson, T. \L uczak and A. Ruci\'nski, \emph{Random Graphs}, Wiley, New York, 2000.


\bibitem{Komlos-Simonovits}
J. Koml\'os and M. Simonovits,
Szemer\'edi's regularity lemma and its applications in graph theory,
{\em Combinatorics, Paul Erd\H{o}s is eighty, Vol. 2 (Keszthely, 1993)},
295-352, Bolyai Soc. Math. Stud. 2, J\'anos Bolyai Math. Soc., Budapest, 1996.


\bibitem{L-M} T. Leighton and A. Moitra,
On Entropy and Extensions of Posets, manuscript 2011.
http://people.csail.mit.edu/moitra/docs/poset.pdf.

\bibitem{Szem} E. Szemer\'edi,
Regular Partitions of Graphs, pp.\ 399-401 in
{\em Probl\'emes Combinatoires et Th\'eorie des Graphes
(Colloq.\ Internat.\ CNRS, Univ.\ Orsay, Orsay, 1976)},
Paris: \'Editions du Centre National de la Recherche Scientifique (CNRS), 1978.



\end{thebibliography}
\end{document}